\documentclass[10pt]{article}
\usepackage[total={4.5in,7.125in},centering]{geometry}

\usepackage{amsmath,amssymb,amsthm}


\usepackage{tikz}
\usetikzlibrary{decorations.markings, calc}
\tikzset{vertex/.style={circle, fill=black, draw=black, thick, inner sep=0pt, minimum size=1ex}}
\tikzset{middlearrow/.style={
        decoration={markings,
            mark= at position 0.5 with {\arrow{#1}} ,
        },
        postaction={decorate}
    }
}
\tikzset{oriented/.style={
        decoration={markings, mark=at position \arrowposition with {\arrow[very thick]{stealth}}},
        postaction={decorate}
}}
\tikzset{orientedRev/.style={
        decoration={markings, mark=at position 1-\arrowposition with {\arrowreversed[very thick]{stealth}}},
        postaction={decorate}
}}
\newcommand\arrowposition{0.67}

\newenvironment{mydigraph}[1][]{%
  \begin{tikzpicture}[every edge/.style={draw, semithick}, #1, scale=0.8]
    \foreach \i in {1,...,7} {\coordinate (v\i) at (180-39-360/7*\i : 1);}
}{
    \foreach \i in {1} {\node [vertex, label = {[label distance=-0.5mm]90: $v_0$}] at (v\i) {};}
    \foreach \i in {2} {\node [vertex, label = {[label distance=-1.5mm]30: $v_1$}] at (v\i) {};}
    \foreach \i in {3} {\node [vertex, label = {[label distance=-0.5mm]0: $v_2$}] at (v\i) {};}
    \foreach \i in {4} {\node [vertex, label = {[label distance=-1mm]-15: $v_3$}] at (v\i) {};}
    \foreach \i in {5} {\node [vertex, label = {[label distance=-1.2mm]225: $v_4$}] at (v\i) {};}
    \foreach \i in {6} {\node [vertex, label = {[label distance=-0.5mm]180: $v_5$}] at (v\i) {};}
    \foreach \i in {7} {\node [vertex, label = {[label distance=-1.5mm]120: $v_6$}] at (v\i) {};}
  \end{tikzpicture}}
  
  \newenvironment{mydigraph2}[1][]{%
  \begin{tikzpicture}[every edge/.style={draw, semithick}, #1, scale=0.8]
    \foreach \i in {1,...,7} {\coordinate (v\i) at (180-39-360/7*\i : 1);}
}{
    \foreach \i in {1} {\node [vertex] at (v\i) {};}
    \foreach \i in {2} {\node [vertex] at (v\i) {};}
    \foreach \i in {3} {\node [vertex] at (v\i) {};}
    \foreach \i in {4} {\node [vertex] at (v\i) {};}
    \foreach \i in {5} {\node [vertex] at (v\i) {};}
    \foreach \i in {6} {\node [vertex] at (v\i) {};}
    \foreach \i in {7} {\node [vertex] at (v\i) {};}
  \end{tikzpicture}}

\newtheorem{theorem}{Theorem}[section]
\newtheorem{proposition}[theorem]{Proposition}
\newtheorem{corollary}[theorem]{Corollary}
\newtheorem{lemma}[theorem]{Lemma}

\theoremstyle{definition}
\newtheorem{example}{Example}[section]


\newcommand{\Z}{\ensuremath{\mathbb{Z}}}

\newcommand{\K}{K^*}
\newcommand\lb[1][0]{\ensuremath{$ $}}

\title{Decompositions of Complete Symmetric Directed Graphs into the Oriented Heptagons}

\author{
  U\u gur Odaba\c s\i 
  \\[1em]
  {\normalsize\shortstack[l]{%
   Illinois State University, Normal, IL 61790-4520, USA
  }}
}

\begin{document}

\maketitle
\thispagestyle{empty}

\begin{abstract} \noindent
The complete symmetric directed graph of order $v$, denoted $\K_{v}$, is the directed graph on $v$~vertices that contains both arcs $(x,y)$ and $(y,x)$ for each pair of distinct vertices $x$ and~$y$. For a given directed graph, $D$, the set of all $v$ for which $\K_{v}$ admits a $D$-decomposition is called the spectrum of~$D$. There are 10 non-isomorphic orientations of a $7$-cycle (heptagon). In this paper, we completely settled the spectrum problem for each of the oriented heptagons.
\end{abstract}

%
%
%



\section{Introduction}\label{introduction}

For a graph (or directed graph) $D$, we use $V(D)$ and $E(D)$ to denote the vertex set of $D$ and the edge set (or arc set) of $D$, respectively.
For a simple graph $G$, we use $G^*$ to denote the symmetric digraph with vertex set $V(G^*)=V(G)$ and arc set $E(G^*)= \bigcup_{\{x,y\}\in E(G)} \bigl\{ (x,y), (y,x) \bigr\}$.
Hence, $\K_v$ is the complete symmetric directed graph of order~$v$.
We use $K_{r \times s}$ to denote the complete simple multipartite graph with $r$~parts of size~$s$.
Also, if $a$ and $b$ are integers with $a \leq b$, we let $[a,b]$ denote the set $\{a, a+1, \ldots, b\}$.

A \emph{decomposition} of a directed graph $K$ is a set
$\Delta =\{D_1, D_2, \ldots, D_t\}$ of subgraphs of $K$ such that
each directed edge, or arc, of $K$ appears in exactly one directed graph $D_i \in \Delta$.
If each $D_i$ in $\Delta$ is isomorphic to a given digraph $D$, the decomposition is called a \emph{$D$-decomposition} of~$K$.
A $\{G,H\}$-decomposition of $K$ is defined similarly.
A $D$-decomposition of $K$ is also known as a \emph{$(K, D)$-design}.
The set of all $v$ for which $\K_{v}$ admits a $D$-decomposition is called the \emph{spectrum of~$D$}.

The \emph{reverse orientation} of $D$, denoted Rev($D$), is the digraph with vertex set $V(D)$ and arc set $\bigl\{ (v,u) : (u,v)\in E(D) \bigr\}$.
We note that the existence of a $D$-decomposition of $K$ necessarily implies the existence of a Rev($D$)-decomposition of Rev($K$).
Since $\K_{v}$ is its own reverse orientation, we note that the spectrum of $D$ is equivalent to the spectrum of Rev($D$).

The necessary conditions for a digraph $D$ to decompose $\K_{v}$ include
\begin{enumerate}
  \itemsep1pt \parskip0pt \parsep0pt
  \renewcommand{\theenumi}{(\alph{enumi})}
  \renewcommand{\labelenumi}{\theenumi}
  \item\label{cond:order} $|V(D)|\leq v$,
  \item\label{cond:size} $|E(D)|$ divides $v(v-1)$, and
  \item\label{cond:degree} gcd$\{$outdegree$(x) : x\in V(D)\}$ and gcd$\{$indegree$(x) : x\in V(D)\}$ both divide $(v-1)$.
\end{enumerate}

The spectrum problem for certain subgraphs (both bipartite and nonbipartite) of $\K_{4}$ has already been studied.
When $D$ is a cyclic orientation of $K_{3}$, then a $(\K_{v},D)$-design is known as a Mendelsohn triple system.
The spectrum for Mendelsohn triple systems was found independently by Mendelsohn~\cite{Me} and Bermond~\cite{Be}.
When $D$ is a transitive orientation of $K_{3}$, then a $(\K_{v},D)$-design is known as a transitive triple system.
The spectrum for transitive triple systems was found by Hung and Mendelsohn~\cite{HuMe}.
There are exactly four orientations of a quadrilateral (i.e., a $4$-cycle).
It was shown in~\cite{Sc} that if $D$ is a cyclic orientation of a quadrilateral, then a $(\K_{v},D)$-design exists if and only if $v\equiv 0$ or $1\pmod{4}$ and $v\neq 4$.
The spectrum problem for the remaining three orientations of a quadrilateral were setteled in~\cite{HaWaHe}.
In \cite{AlHeVa}, Alspach et~al.\ showed that $\K_{v}$ can be decomposed into each of the four orientations of a pentagon
(i.e., a $5$-cycle) if and only if $v\equiv 0$ or $1\pmod{5}$.
In \cite{AlGaSaVe}, it is shown that for positive integers $m$ and $v$ with $2\leq m\leq v$ the directed graph $\K_{v}$ can be decomposed into directed cycles (i.e., with all the edges being oriented in the same direction) of length $m$ if and only if $m$ divides the number of arcs in $\K_{v}$ and $(v,m)\notin \bigl\{ (4,4), (6,3), (6,6) \bigr\}$.
Also recently \cite{lambdaC6}, Adams et~al. settled the $\lambda$-fold spectrum problem for all possible orientations of a $6$-cycle.

There are ten non-isomorphic orientations of a heptagon (see Figure~\ref{fig:OrientedC7's}).
The spectrum problem was settled for the directed heptagon ($D_{10}$ in Figure \ref{fig:OrientedC7's}) in \cite{AlGaSaVe}. 

\begin{theorem}[\cite{AlGaSaVe}] \label{thm:Directed6-cycle}
  For integers $v\geq 7$,
  there exists a $D_{10}$-decomposition of $\K_{v}$
  if and only if
  $v(v-1)\equiv 0\pmod{7}$.
\end{theorem}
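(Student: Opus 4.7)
The plan is to establish necessity by counting and sufficiency by combining direct constructions for small $v$ with a recursive ``fill-in'' scheme. For necessity, each copy of $D_{10}$ contains exactly $7$ arcs, so the total arc count $v(v-1)$ of $\K_v$ must be divisible by $7$; equivalently, $v\equiv 0$ or $1\pmod{7}$.

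For sufficiency, I would first handle the base cases $v=7,8,14,15$ by direct construction. Label the vertex set as $\Z_v$, or as $\Z_{v-1}\cup\{\infty\}$ when $v\equiv 1\pmod 7$, and search for a small set of base directed $7$-cycles whose arc-differences (both pure and those incident with $\infty$) together cover every nonzero element of the ambient cyclic group the correct number of times; developing each base through the additive action of $\Z_{v}$ (resp. $\Z_{v-1}$) then produces the required decomposition. A short orbit is needed for $v=7$, since a single full orbit under $\Z_7$ yields $7$ cycles while only $6$ are required.

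For general $v\equiv 0$ or $1\pmod 7$, I would proceed by a standard recursive construction: partition $V(\K_v)$ into $k$ groups of size $7$ (together with one extra vertex when $v=7k+1$), decompose the arcs between distinct groups using a $D_{10}$-decomposition of the complete symmetric multipartite digraph $K_{k\times 7}^*$, and decompose the arcs inside each group (adjoining the extra point to one group when $v=7k+1$) using a copy of the $v=7$ or $v=8$ base decomposition. The remaining task is therefore to exhibit a $D_{10}$-decomposition of $K_{k\times 7}^*$ for every $k\geq 2$, which can be attempted via a mixed-difference construction in $\Z_{7k}$ with the subgroup $7\Z_{7k}$ singling out the groups.

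The main obstacle is verifying that the mixed-difference construction in fact yields directed $7$-cycles (rather than some other orientation of $C_7$) for every admissible $k$, and patching the small sporadic values of $k$ where the construction fails or leaves leftover arcs. Such exceptions, common in theorems of this style, typically demand alternative group actions or ad hoc, often computer-assisted, constructions; once they are resolved, combining them with the recursive scheme yields the theorem for all $v\geq 7$ satisfying $v(v-1)\equiv 0\pmod{7}$.
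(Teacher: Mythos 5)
This theorem is not proved in the paper at all: it is quoted from Alspach, Gavlas, \v{S}ajna, and Verrall \cite{AlGaSaVe}, where it is the case $m=7$ of a much more substantial theorem on decomposing $\K_v$ into directed $m$-cycles. So there is no internal proof to compare against, and your proposal must stand on its own as a proof of the cited result. As such it does not succeed. The necessity half (each copy of $D_{10}$ has $7$ arcs, so $7 \mid v(v-1)$, i.e.\ $v\equiv 0$ or $1\pmod 7$) is fine. But the sufficiency half is a research plan rather than an argument: you never exhibit the base decompositions for $v=7,8,14,15$, you never actually produce a $D_{10}$-decomposition of $\K_{k\times 7}$ for general $k$, and you explicitly concede that the mixed-difference construction may fail to produce directed (as opposed to merely oriented) $7$-cycles and may leave sporadic values of $k$ requiring ad hoc or computer-assisted patches. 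Those are precisely the steps where all of the work in \cite{AlGaSaVe} lies, so the proposal leaves the theorem unproved.

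There is also a concrete flaw in the recursion as you state it for $v=7k+1$. If the extra vertex $\infty$ is adjoined to only one group, then the arcs between $\infty$ and the other $k-1$ groups are covered neither by the $\K_{k\times 7}$ piece (which lives on the $7k$ non-infinity vertices) nor by the within-group pieces. The standard fix --- and the one this paper uses in its analogous Lemmas~\ref{1mod14} and~\ref{8mod14} for the residues $1$ and $8\pmod{14}$ --- is to adjoin $\infty$ to \emph{every} group, placing a $\K_8$ (or $\K_{15}$) design on each $H_j\cup\{\infty\}$; these are pairwise arc-disjoint since they share only the vertex $\infty$, and together with the multipartite piece they cover all arcs. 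You would also need to justify the existence of the multipartite decompositions by something more than optimism; this paper, for instance, gets them from known $C_7$-decompositions of complete equipartite graphs (Proposition~\ref{C7Knby7}) together with a $\{K_3,K_5\}$-decomposition of the quotient, rather than from a bare difference construction.
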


In this work, we completely settle the spectrum problem for the remaining nine non-isomorphic oriented heptagons
(i.e., $D_i$ for $i\in [1,9]$ as seen in Figure~\ref{fig:OrientedC7's}).

If $D$ is an oriented heptagon and if there exists a  $(\K_{v},D)-$design, then we must have $7|v(v-1)$. Moreover, if $D$ is an oriented odd cycle, then gcd$\{$outdegree $(x)\colon\ x\in V(D)\}=$gcd$\{$indegree$(x)\colon\ x\in V(D)\}=1$. Thus from the necessary conditions established above we have the following.

\begin{lemma} \label{lem:MainNecessaryConditions}
  Let $D \in \{D_1,D_2,\ldots, D_9\}$ and let $v\geq7$ be a positive integer.
  There exists a $D$-decomposition of\/ $\K_v$
  only if $v(v-1) \equiv 0\pmod{7}$.
\end{lemma}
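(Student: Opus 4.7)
The plan is simply to specialize the general necessary conditions (a)--(c) stated earlier to the case of an oriented heptagon and read off the lemma. Conditions (a) and (b) are immediate: every $D_i$ with $i\in[1,9]$ has $|V(D_i)|=|E(D_i)|=7$, so (a) becomes $7\leq v$, which matches the hypothesis, and (b) becomes $7\mid v(v-1)$, which is exactly the desired conclusion.

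The remaining condition (c) is the only one requiring a brief remark, and the author has already supplied it in the paragraph preceding the statement. For completeness I would justify that remark as follows: since the outdegrees of $D_i$ sum to $|E(D_i)|=7$, which is odd, at least one vertex must have $\outdeg(x)=1$, forcing $\gcd\{\outdeg(x):x\in V(D_i)\}=1$; the analogous argument applies to indegrees. Consequently condition (c) reduces to the vacuous requirement $1\mid v-1$ and imposes no further restriction.

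Assembling these three observations finishes the proof. There is no real obstacle here: the lemma is a routine specialization in which the three universal necessary conditions collapse to the single arithmetic constraint $7\mid v(v-1)$, with the only nontrivial ingredient being the standard parity remark for oriented odd cycles.
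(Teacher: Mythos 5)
Your proposal is correct and follows the same route as the paper, which likewise derives the lemma by specializing the necessary conditions (a)--(c), noting that (b) gives $7\mid v(v-1)$ and that the degree condition (c) is vacuous for an oriented odd cycle. Your parity argument (the outdegrees sum to the odd number $7$, so some vertex has outdegree $1$) is a clean justification of the gcd claim that the paper simply asserts.
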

The remainder of this paper is dedicated to showing the existence of the decompositions in questions in order to establish sufficiency of these necessary conditions.
Henceforth, each of the graphs $D_i$ with $i\in [1,9]$ in Figure~\ref{fig:OrientedC7's}, with vertices labeled as in the figure, will be represented by ${D_i}[v_0, v_1,\ldots, v_6]$. For example, ${D_3}[v_0, v_1,\ldots, v_6]$  is the graph with vertex set $\{v_0,v_1,v_2,v_3,v_4,v_5,v_6\}$  and arc set $\bigl\{(v_1,v_0),(v_1,v_2),(v_3,v_2),(v_3,v_4),\allowbreak (v_4,v_5),(v_5,v_6),(v_6,v_0) \bigr\}$.

\begin{figure}
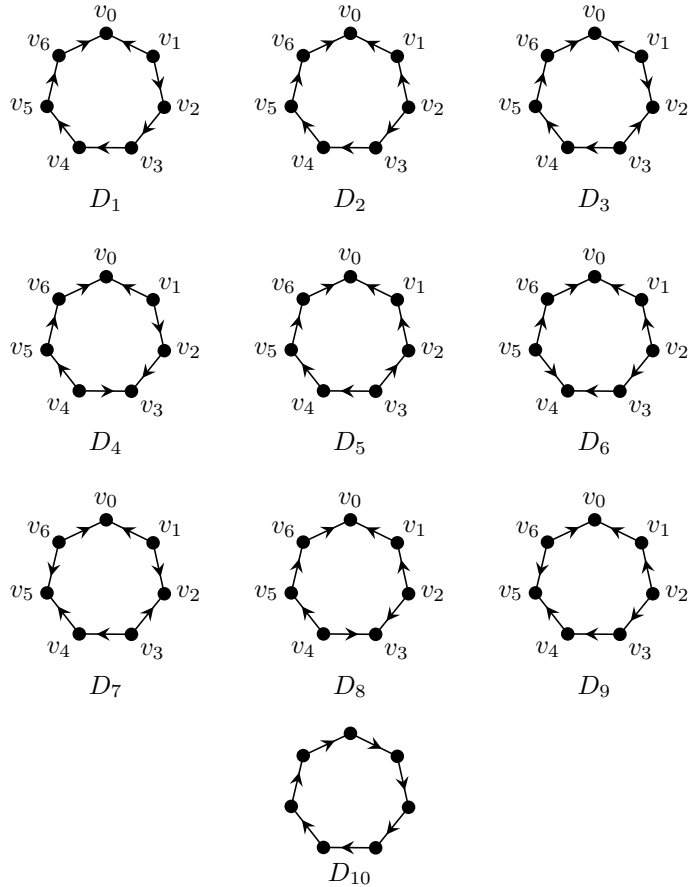

  \centering
  \addtolength\lineskip{1em}
  \shortstack{
    \begin{mydigraph}
      \path
        (v2) edge[oriented] (v1)
        (v2) edge[oriented] (v3)
        (v3) edge[oriented] (v4)
        (v4) edge[oriented] (v5)
        (v5) edge[oriented] (v6)
        (v6) edge[oriented] (v7)
        (v7) edge[oriented] (v1);
    \end{mydigraph}
    \\ $D_1$}
  \quad
  \shortstack{
    \begin{mydigraph}
      \path
        (v2) edge[oriented] (v1)
        (v3) edge[oriented] (v2)
        (v3) edge[oriented] (v4)
        (v4) edge[oriented] (v5)
        (v5) edge[oriented] (v6)
        (v6) edge[oriented] (v7)
        (v7) edge[oriented] (v1);
    \end{mydigraph}
    \\ $D_2$}
  \quad
  \shortstack{
    \begin{mydigraph}
      \path
        (v2) edge[oriented] (v1)
        (v2) edge[oriented] (v3)
        (v4) edge[oriented] (v3)
        (v4) edge[oriented] (v5)
        (v5) edge[oriented] (v6)
        (v6) edge[oriented] (v7)
        (v7) edge[oriented] (v1);
    \end{mydigraph}
    \\ $D_3$}
  \quad
  \shortstack{
    \begin{mydigraph}
      \path
        (v2) edge[oriented] (v1)
        (v2) edge[oriented] (v3)
        (v3) edge[oriented] (v4)
        (v5) edge[oriented] (v4)
        (v5) edge[oriented] (v6)
        (v6) edge[oriented] (v7)
        (v7) edge[oriented] (v1);
    \end{mydigraph}
    \\ $D_4$}
  \quad
  \shortstack{
    \begin{mydigraph}
      \path
        (v2) edge[oriented] (v1)
        (v3) edge[oriented] (v2)
        (v4) edge[oriented] (v3)
        (v4) edge[oriented] (v5)
        (v5) edge[oriented] (v6)
        (v6) edge[oriented] (v7)
        (v7) edge[oriented] (v1);
    \end{mydigraph}
    \\ $D_5$}
  \quad
  \shortstack{
    \begin{mydigraph}
      \path
        (v2) edge[oriented] (v1)
        (v3) edge[oriented] (v2)
        (v3) edge[oriented] (v4)
        (v4) edge[oriented] (v5)
        (v6) edge[oriented] (v5)
        (v6) edge[oriented] (v7)
        (v7) edge[oriented] (v1);
    \end{mydigraph}
    \\ $D_6$}
  \quad
  \shortstack{
    \begin{mydigraph}
      \path
        (v2) edge[oriented] (v1)
        (v2) edge[oriented] (v3)
        (v4) edge[oriented] (v3)
        (v4) edge[oriented] (v5)
        (v5) edge[oriented] (v6)
        (v7) edge[oriented] (v6)
        (v7) edge[oriented] (v1);
    \end{mydigraph}
    \\ $D_7$}
  \quad
    \shortstack{
    \begin{mydigraph}
      \path
        (v2) edge[oriented] (v1)
        (v3) edge[oriented] (v2)
        (v3) edge[oriented] (v4)
        (v5) edge[oriented] (v4)
        (v5) edge[oriented] (v6)
        (v6) edge[oriented] (v7)
        (v7) edge[oriented] (v1);
    \end{mydigraph}
    \\ $D_8$}
  \quad
  \shortstack{
    \begin{mydigraph}
      \path
        (v2) edge[oriented] (v1)
        (v3) edge[oriented] (v2)
        (v3) edge[oriented] (v4)
        (v4) edge[oriented] (v5)
        (v5) edge[oriented] (v6)
        (v7) edge[oriented] (v6)
        (v7) edge[oriented] (v1);
    \end{mydigraph}
    \\ $D_9$}
      \quad
      \shortstack{
    \begin{mydigraph2}
      \path
        (v1) edge[oriented] (v2)
        (v2) edge[oriented] (v3)
        (v3) edge[oriented] (v4)
        (v4) edge[oriented] (v5)
        (v5) edge[oriented] (v6)
        (v6) edge[oriented] (v7)
        (v7) edge[oriented] (v1);
    \end{mydigraph2}
    \\ $D_{10}$}
  \caption{The ten orientated heptagons.}
  \label{fig:OrientedC7's}
\end{figure}

The following result of Alspach and Gavlas, proves the existence of $7$-cycle decompositions of complete graphs.

\begin{theorem}[\cite{AlGa}] \label{C7Kv}
 Let $v\geq 7$ be an integer. There exists a $C_7$-decom\-po\-si\-tion of $K_{v}$ if and only if $v \equiv 1$ or $7 \pmod{14}$.
\end{theorem}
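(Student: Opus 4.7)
The plan is to verify necessity directly and then establish sufficiency via a cyclic (difference-based) construction in each of the two admissible residue classes, with a separate recursive argument for the case $v\equiv 7\pmod{14}$.

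For necessity: every vertex of $K_v$ has degree $v-1$, and every 7-cycle through a given vertex uses exactly two incident edges at that vertex, so $v-1$ must be even, i.e.\ $v$ is odd. The number of cycles in any $C_7$-decomposition equals $|E(K_v)|/7=v(v-1)/14$, which must be a nonnegative integer, forcing $v(v-1)\equiv 0\pmod{14}$. Combined with $v$ odd, these give $v\equiv 1$ or $7\pmod{14}$.

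For sufficiency with $v=14k+1$, I would identify $V(K_v)$ with $\mathbb{Z}_{14k+1}$ and construct a \emph{starter}: a collection of $k$ base 7-cycles whose $7k$ edge-lengths, read as elements of $\{1,2,\dots,7k\}$, cover that set exactly once. Developing the starter under the translation action of $\mathbb{Z}_{14k+1}$ produces $k(14k+1)=v(v-1)/14$ pairwise edge-disjoint 7-cycles, which is precisely the required decomposition. Such starters can be produced from Skolem-type difference sequences in a uniform family for large $k$, with a few small values of $k$ handled by explicit ad hoc base cycles.

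For sufficiency with $v=14k+7$ a purely cyclic construction on $v$ points is not available, so I would partition $V(K_v)$ into $2k+1$ groups of seven vertices and decompose separately. Walecki's classical result decomposes each of the $2k+1$ intra-group $K_7$'s into three Hamilton 7-cycles, while the complete multipartite graph between the groups, $K_{(2k+1)\times 7}$, can be handled by a separate difference construction over $\mathbb{Z}_{2k+1}\times\mathbb{Z}_7$ (or by a recursion from smaller multipartite decompositions when a parity obstruction interferes). The main obstacle will be the combinatorial bookkeeping for the two families of starters: producing enough base cycles whose edge-differences match the prescribed multisets \emph{and} whose vertex sequences are genuinely 7-cycles rather than closed walks with a repeated vertex. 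This becomes delicate as $k$ grows and typically has to be carried out by a parametrized formula after a finite check of small cases (likely $v\in\{7,15,21,29\}$ and the analogous first few values in the other class).
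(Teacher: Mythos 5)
The paper does not prove this statement at all: it is quoted verbatim from Alspach and Gavlas (\emph{J.\ Combin.\ Theory Ser.\ B} \textbf{81} (2001), 77--99) and used as a black box, so there is no in-paper argument to compare yours against. Judged on its own terms, your necessity argument is complete and correct: the even-degree condition forces $v$ odd, the edge count forces $14 \mid v(v-1)$, and together these pin down $v \equiv 1$ or $7 \pmod{14}$.

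The sufficiency half, however, has a genuine gap: it is a strategy, not a proof. For $v = 14k+1$ everything rests on the sentence asserting that starters ``can be produced from Skolem-type difference sequences in a uniform family for large $k$,'' but no such family is exhibited, and producing base $7$-cycles whose edge-lengths exactly cover $\{1,\dots,7k\}$ while avoiding repeated vertices is precisely the hard content of the theorem --- you acknowledge this yourself in the final paragraph without resolving it. For $v = 14k+7$ you additionally lean on a $C_7$-decomposition of $K_{(2k+1)\times 7}$, which is itself a nontrivial theorem (the paper imports it separately as Proposition~\ref{C7Knby7}, citing Liu) and is not established by your sketch. The overall architecture (difference methods for $v\equiv 1$, Walecki plus an equipartite decomposition for $v\equiv 7$) is the standard and correct one, but as written the proposal defers exactly the steps where the work lies. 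If you intend to rely on this result, the honest options are to cite Alspach--Gavlas as the paper does, or to actually write down the parametrized base cycles.
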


In \cite{liu,liu2} Liu construct cycle decompositions of complete equipartite graphs. 
Although Liu’s result concerned resolvable decompositions, we omit this detail
since we will only make use of the following particular instances of his result.

\begin{proposition}[\cite{liu,liu2}] \label{C7Knby7}
Let $n\geq 3$ be an odd integer. There exists a $C_7$-de\-com\-po\-si\-tion of $K_{n \times 7}$.
\end{proposition}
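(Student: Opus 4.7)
The plan is to reduce the proposition to the following key lemma: for every odd $m\geq 3$, the lexicographic product $C_m[\overline{K_7}]$ admits a $C_7$-decomposition. Granting this, I would partition $K_{n\times 7}$ into ``layers'' $L_\ell$ for $\ell=1,\ldots,(n-1)/2$, where $L_\ell$ consists of all edges whose endpoints lie in parts differing by $\pm\ell$ modulo $n$. The subgraph $L_\ell$ is a disjoint union of $\gcd(\ell,n)$ copies of $C_{n/\gcd(\ell,n)}[\overline{K_7}]$, and since $\ell\leq (n-1)/2<n/2$ the integer $n/\gcd(\ell,n)$ is always at least $3$, and it is odd because $n$ is odd. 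Hence the lemma applies to each piece of each layer, and combining the resulting decompositions yields a $C_7$-decomposition of $K_{n\times 7}$. The routine necessary conditions (even degree $7(n-1)$, and $7\mid 49\binom{n}{2}$) are automatic for odd $n$.

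To establish the lemma, I would employ the difference method on $\Z_{7m}$, labelling the parts of $C_m[\overline{K_7}]$ as residue classes modulo $m$. Edges of $C_m[\overline{K_7}]$ correspond to $\Z_{7m}$-differences congruent to $\pm 1 \pmod m$, so the set of live undirected differences is $\{1,m-1,m+1,2m-1,2m+1,3m-1,3m+1\}$, a set of exactly $7$ elements. A single base $7$-cycle in $\Z_{7m}$ whose edge-differences realize exactly these seven values would, under the $\Z_{7m}$-translation action, tile $C_m[\overline{K_7}]$ by $C_7$'s: $7m$ translates of $7$ edges each gives $49m$ edges, matching $|E(C_m[\overline{K_7}])|$.

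The construction then reduces to finding signs $\varepsilon_i\in\{\pm 1\}$ on the seven live differences together with an ordering such that (a)~the signed sum vanishes modulo $7m$, and (b)~the seven partial sums are pairwise distinct in $\Z_{7m}$. The natural assignment, $+$ on $1,m+1,2m+1,3m+1$ and $-$ on $m-1,2m-1,3m-1$, yields a signed sum of $7$, which can be shifted into $7m\Z$ by flipping one or two signs (each flip changes the sum by $\pm 2(jm\pm 1)$, giving ample freedom since $m$ is odd and coprime to $2$). The hardest step will be verifying condition~(b) uniformly in $m$; I expect a short parametric ordering of the signed differences to succeed for all but finitely many small $m$, with the base case $m=3$ verified by the explicit $7$-cycle $(0,1,3,7,12,19,11)$ in $\Z_{21}$, whose edge-differences are $\{1,2,4,5,7,8,10\}=\{1,m-1,m+1,2m-1,2m+1,3m-1,3m+1\}$ exactly as required. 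Trivial stabilizers of the base cycle under $\Z_{7m}$ follow automatically from~(b), so the full $\Z_{7m}$-orbit is a valid $C_7$-decomposition.
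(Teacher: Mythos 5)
First, note that the paper offers no proof of this proposition at all: it is quoted directly from Liu's work on (resolvable) cycle decompositions of complete equipartite graphs, and in fact the paper only ever invokes it for $n\in\{3,5\}$ (via $\{K_3,K_5\}$-decompositions in Lemmas~\ref{(2x)times7} and~\ref{(2x+1)times7}). So you are supplying an argument where the paper supplies a citation --- but your argument has a fatal gap at its very first step. Your key lemma, that $C_m[\overline{K_7}]$ admits a $C_7$-decomposition for every odd $m\geq 3$, is \emph{false} for every odd $m\geq 9$: the graph $C_m[\overline{K_7}]$ contains no $7$-cycle whatsoever. Indeed, any cycle in $C_m[\overline{K_7}]$ projects to a closed walk in $C_m$ whose steps are all $\pm 1$; for a $7$-cycle the seven steps must sum to a multiple of $m$, but a sum of seven terms each equal to $\pm 1$ is an odd integer of absolute value at most $7$, and the only multiple of $m\geq 9$ in $[-7,7]$ is $0$, which is even. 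Consequently your layer $L_1$ (which for $n\geq 9$ is a single copy of $C_n[\overline{K_7}]$) admits no $C_7$-decomposition, and the layer-by-layer reduction collapses: every $7$-cycle in $K_{n\times 7}$ with $n\geq 9$ must use edges of at least two distinct jump lengths, so no decomposition that respects your layers can exist.

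The same parity obstruction resurfaces, in milder form, in your difference-method step. Writing the signed sum as $Am+B$ with $A=\varepsilon_2+\varepsilon_3+2(\varepsilon_4+\varepsilon_5)+3(\varepsilon_6+\varepsilon_7)$ (always even) and $B=\varepsilon_1-\varepsilon_2+\varepsilon_3-\varepsilon_4+\varepsilon_5-\varepsilon_6+\varepsilon_7$ (always odd, $|B|\leq 7$), the condition $Am+B\equiv 0\pmod{7m}$ forces $|B|=|7-A|\,m\geq m$, so a single base cycle under the full $\Z_{7m}$-translation action can exist only for $m\leq 7$; a short subset-sum check shows $m=7$ also fails (no subset of $\{1,6,8,13,15,20,22\}$ sums to $18$), leaving only $m=3$ and $m=5$ --- precisely the two cases the paper handles explicitly (compare Examples~\ref{exK3by7} and~\ref{exK5by7}) and, not coincidentally, the only two cases it actually needs. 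So your ``hardest step'' is not condition~(b) but condition~(a), and no amount of sign-flipping rescues it. A correct self-contained proof of the proposition for general odd $n$ would have to either follow Liu and build resolvable $C_7$-decompositions directly, or use base cycles that mix several jump lengths rather than decomposing layer by layer.
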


For the orientations of a heptagon that are isomorphic to their own reverse,
we have the following as a direct result of Theorem~\ref{C7Kv}.

\begin{lemma} \label{reduction}
If $K_v$ ($K_{r \times s}$) has a $C_7$-decomposition, then $\K_v$ ($\K_{r \times s}$) has a $D_i$-de\-com\-po\-si\-tion for $i\in [1,7]$. 
\end{lemma}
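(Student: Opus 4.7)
The plan is to reduce the lemma to the following key fact: each $D_i$ with $i\in[1,7]$ is self-reverse, meaning $D_i\cong\Rev(D_i)$. Once this is established, the lemma follows by a doubling construction. For each undirected $7$-cycle $C_j$ in the given $C_7$-decomposition of $G\in\{K_v,K_{r\times s}\}$, I would orient its edges to obtain a copy $\vec{D}_j$ of $D_i$; the seven ``opposite'' arcs of $G^*$ then form $\Rev(\vec{D}_j)$, which is another copy of $D_i$ by the self-reverse property. Running this over all $t$ cycles $C_1,\ldots,C_t$ in the $C_7$-decomposition yields $2t$ copies of $D_i$ whose arc sets partition the $14t$ arcs of $G^*$, giving the desired $D_i$-decomposition.

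For the self-reverse claim, I would encode each $D_i$ as a cyclic sign sequence $(s_0,\ldots,s_6)$, where $s_j=+$ if the arc on edge $\{v_j,v_{j+1}\}$ runs $v_j\to v_{j+1}$ and $s_j=-$ otherwise (indices mod $7$). Reading these off from Figure~\ref{fig:OrientedC7's}, the seven sequences come out as $\texttt{-++++++}$, $\texttt{--+++++}$, $\texttt{-+-++++}$, $\texttt{-++-+++}$, $\texttt{---++++}$, $\texttt{--++-++}$, and $\texttt{-+-++-+}$ for $i=1,\ldots,7$ respectively. In each case one readily finds an integer $c\in\{0,\ldots,6\}$ with $s_j=s_{(c-j)\bmod 7}$ for all $j$; the corresponding reflection $\psi(v_l)=v_{(c+1-l)\bmod 7}$ of the underlying heptagon then sends each arc of $D_i$ to the reverse of an arc of $D_i$, exhibiting the desired isomorphism $D_i\cong\Rev(D_i)$. (This palindromic property fails for $D_8$ and $D_9$, which is precisely why the lemma stops at $i=7$; those two orientations must be treated by different constructions elsewhere in the paper.)

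The only substantive step is the seven-case palindromic check, a routine finite verification with no real obstacle. The doubling construction of the first paragraph is immediate once the self-reverse property is in hand.
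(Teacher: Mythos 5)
Your proposal is correct and is exactly the argument the paper intends: the paper justifies Lemma~\ref{reduction} in one line by noting that $D_1,\ldots,D_7$ are isomorphic to their own reverses, so each $7$-cycle of the $C_7$-decomposition doubles into a $D_i$ and its reverse (another $D_i$), partitioning the arcs of $K_v^*$ (resp.\ $K_{r\times s}^*$). Your sign-sequence verification of the self-reverse property for $i\in[1,7]$ (and its failure for $D_8$, $D_9$) is accurate and simply makes explicit what the paper leaves as a remark.
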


Theorem \ref{C7Kv} together with Lemma \ref{reduction} give the following result.

\begin{corollary} \label{1,7(mod14)forRev}
If $v \equiv 1$ or $7 \pmod{14}$, then $\K_v$ has a $D_i$-decomposition for $i\in [1,7]$. 
\end{corollary}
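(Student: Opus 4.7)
The proof is essentially a one-line composition of the two preceding results, so the plan is simply to chain them together. Given an integer $v$ with $v \equiv 1$ or $7 \pmod{14}$, Theorem~\ref{C7Kv} produces a $C_7$-decomposition of $K_v$. Lemma~\ref{reduction} then converts this directly into a $D_i$-decomposition of $\K_v$ for each $i \in [1,7]$, yielding the stated corollary.

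Since Lemma~\ref{reduction} is quoted as a black box, the only content worth highlighting (and the reason the statement is restricted to $i \in [1,7]$) is the mechanism behind it: each orientation $D_i$ with $i \in [1,7]$ is isomorphic to its own reverse, so each undirected 7-cycle in the $C_7$-decomposition of $K_v$ can be replaced by two arc-disjoint copies of $D_i$ --- one obtained by orienting the cycle in a fixed direction to produce $D_i$, the other obtained by taking the reverse orientation, which is again a copy of $D_i$. Together these two oriented heptagons account for all arcs of $\K_v$ lying above the edges of the given undirected cycle, and ranging over all cycles of the $C_7$-decomposition of $K_v$ gives a $D_i$-decomposition of $\K_v$.

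There is no real obstacle here, as both ingredients are already in place; the corollary is recorded simply to isolate the residue classes $v \equiv 1, 7 \pmod{14}$ for later reference, so that the subsequent sections only need to address the remaining residues $v \equiv 0, 8 \pmod{14}$ (together with the orientations $D_8$, $D_9$ that are not self-reverse).
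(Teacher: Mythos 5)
Your proposal is correct and matches the paper's argument exactly: the paper derives this corollary as an immediate consequence of Theorem~\ref{C7Kv} combined with Lemma~\ref{reduction}, just as you do. Your added explanation of why Lemma~\ref{reduction} works (replacing each undirected $7$-cycle by an orientation isomorphic to $D_i$ together with its reverse, which is again $D_i$ since these seven orientations are self-reverse) is the intended mechanism and is consistent with the paper's remark preceding that lemma.
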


Similarly, Proposition \ref{C7Knby7} together with Lemma \ref{reduction} give the following result.

\begin{corollary} \label{Knby7}
For all odd integer $n\geq 3$, there exists a $D_i$-decomposition of $\K_{n \times 7}$  for $i\in [1,7]$. 
\end{corollary}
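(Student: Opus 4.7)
The claim combines two results already proved in the excerpt, so there is essentially no work to do. The plan is to simply apply Proposition~\ref{C7Knby7} to obtain, for any odd $n\geq 3$, a $C_7$-decomposition of $K_{n\times 7}$, and then feed that decomposition into Lemma~\ref{reduction} to upgrade it to a $D_i$-decomposition of $\K_{n\times 7}$ for each $i\in[1,7]$.

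If I wanted to spell out what Lemma~\ref{reduction} is doing, I would note that for $i\in[1,7]$ the orientation $D_i$ is isomorphic to $\Rev(D_i)$, so given a $C_7$-decomposition $\{C^{(1)}, C^{(2)}, \ldots, C^{(t)}\}$ of $K_{n\times 7}$, each undirected heptagon $C^{(j)}$ corresponds in $\K_{n\times 7}$ to two oriented heptagons on the same vertex cycle (one for each direction around the cycle), both of which are copies of $D_i$. Together these $2t$ oriented heptagons cover every arc of $\K_{n\times 7}$ exactly once.

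Since every ingredient is already in hand, there is no real obstacle; the statement is recorded here only to package the conclusion for convenient reference in later sections.
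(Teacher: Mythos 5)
Your proof is correct and takes essentially the same route as the paper, which derives this corollary in one line by combining Proposition~\ref{C7Knby7} with Lemma~\ref{reduction}. Your extra paragraph unpacking Lemma~\ref{reduction} (splitting the $14$ arcs over each undirected heptagon into an orientation $O$ and $\Rev(O)$, both copies of $D_i$ since $D_i\cong\Rev(D_i)$ for $i\in[1,7]$) is accurate but not needed, as that lemma is already established.
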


We also make use of the following results in the next section. All of these results can be found in the \emph{Handbook of Combinatorial Designs} \cite{CoDi}.

\begin{theorem} \label{K3K5-odd}
 Let $v\geq 3$ be an odd integer. There exists a $\{K_3,K_5\}$-de\-com\-po\-si\-tion of $K_{v}$.
\end{theorem}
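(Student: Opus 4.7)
The plan is to view a $\{K_3, K_5\}$-decomposition of $K_v$ as a pairwise balanced design of order $v$ with block sizes in $\{3,5\}$, and split on the residue of $v$ modulo $6$. The odd-order hypothesis already ensures the arithmetic conditions: each vertex has degree $v-1$ (even), which is compatible with the contributions $2$ and $4$ that $K_3$- and $K_5$-blocks make at each vertex, and $\binom{v}{2}$ is easily expressible as $3a+10b$ in all relevant residues.

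When $v \equiv 1$ or $3 \pmod{6}$, Kirkman's theorem supplies a Steiner triple system of order $v$, which is trivially a $\{K_3,K_5\}$-decomposition (using no $K_5$-blocks at all), so nothing remains to do in these residues.

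When $v \equiv 5 \pmod{6}$ --- the only residue class in which at least one $K_5$-block is forced --- I would first dispatch small base cases ($v = 5, 11, 17, 23$, and possibly $v=29$) by direct construction. For $v=5$ a single $K_5$ suffices; for $v=11$, one verifies that $K_{11}$ decomposes into one $K_5$ together with $15$ triangles, for instance via a cyclic base-block development over $\Z_{11}$ that places the $K_5$ on a short orbit. For larger $v \equiv 5 \pmod{6}$, I would then run a PBD-closure style recursion: construct a group-divisible design with block sizes in $\{3,5\}$ whose groups have admissible orders, and fill each group with a $\{K_3,K_5\}$-decomposition furnished by the inductive hypothesis (the singleton "groups of size $1$" case being vacuous).

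The main obstacle is precisely this $v \equiv 5 \pmod{6}$ case: arranging the recursion so that every admissible $v$ is reached from a finite, explicitly verified base set. The standard remedy is to anchor the recursion on transversal designs $TD(k,n)$ or on Hanani-type existence results for $\{3,5\}$-GDDs, but the delicate work lies in the bookkeeping --- namely, identifying the handful of small orders that require an ad-hoc construction rather than falling out of the recursive step.
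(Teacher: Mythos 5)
This statement is not proved in the paper at all: it is Hanani's classical result that a pairwise balanced design with block sizes in $\{3,5\}$ exists for every odd $v\geq 3$, and the paper simply quotes it from the \emph{Handbook of Combinatorial Designs}. So there is no in-paper argument to compare against; what can be assessed is whether your sketch would stand as a proof on its own. The necessary-condition arithmetic and the residues $v\equiv 1,3\pmod{6}$ are handled correctly and completely (a Steiner triple system is indeed a $\{K_3,K_5\}$-decomposition with no $K_5$-blocks). The problem is that the entire content of the theorem sits in the case $v\equiv 5\pmod 6$, and there your proposal is a plan rather than a proof: you name the right machinery (base cases, $\{3,5\}$-GDDs, PBD-closure, transversal designs) but exhibit no specific GDD family, no filling lemma with verified ingredient designs, and no argument that the recursion actually reaches every $v\equiv 5\pmod 6$ from your finite base set. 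You acknowledge this yourself (``the delicate work lies in the bookkeeping''), but that bookkeeping \emph{is} the theorem; without it the proof is incomplete.

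One concrete misstep worth flagging: for $v=11$ you propose a cyclic development over $\Z_{11}$ ``that places the $K_5$ on a short orbit.'' Since $\Z_{11}$ has prime order, the stabilizer of any $5$-subset is trivial, so every block orbit has full length $11$; there are no short orbits, and in particular no single $K_5$ can be produced this way. A correct construction for $v=11$ instead takes one $K_5$ as a block and decomposes $K_{11}$ minus that $K_5$ into $15$ triangles (equivalently, a $3$-GDD of type $1^{6}5^{1}$), which exists but must be built by other means. If you want a self-contained proof, either carry out Hanani's recursion in full or, more in the spirit of this paper, simply cite the result as known.
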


\begin{theorem} \label{K3K5-even}
 Let $v\geq 6$ be an even integer. There exists a $\{K_3,K_5\}$-de\-com\-po\-si\-tion of $K_{v}-I$ where $I$ is a $1$-factor of $K_{v}$.
\end{theorem}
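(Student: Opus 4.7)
My plan is to show that the obvious necessary conditions on $K_v - I$ are also sufficient, by combining direct small-case constructions with a recursive inflation based on Theorem~\ref{K3K5-odd}. First I would verify the arithmetic: $|E(K_v - I)| = v(v-2)/2$ is expressible as $3a + 10b$ and the vertex degree $v - 2$ is expressible as $2\alpha + 4\beta$ for non-negative integers whenever $v$ is even and $v \geq 6$. It is convenient to reformulate the problem via $K_v - I \cong K_{(v/2) \times 2}$, the cocktail-party graph on $v/2$ pairs. Then I would construct decompositions explicitly for small even $v \in \{6, 8, 10, 12, 14\}$: for example $K_6 - I$ (the octahedron) splits into its four transversal triangles, while $K_{10} - I$ must contain at least one $K_5$ since $40$ is not divisible by $3$, and a single transversal $K_5$ together with ten triangles does the job. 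Among these small constructions one records in particular a $\{K_3, K_5\}$-decomposition of $K_{5 \times 2}$, which plays a key role in the general step.

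For the general case I would use a filling-in-groups inflation. When $v = 2n$ with $n$ odd and $n \geq 3$, Theorem~\ref{K3K5-odd} supplies a $\{K_3, K_5\}$-decomposition of $K_n$; inflating each vertex to a pair converts $K_n$ into $K_{n \times 2} = K_v - I$ and converts each block of size $3$ or $5$ into a copy of $K_{3 \times 2}$ or $K_{5 \times 2}$, each of which admits a $\{K_3, K_5\}$-decomposition by the base-case step. When $v \equiv 0 \pmod 4$, so that $n = v/2$ is itself even, I would apply the construction recursively: start from a $\{K_3, K_5\}$-decomposition of $K_n - I_n$ obtained by induction and inflate, then patch the inflated $1$-factor using an auxiliary small decomposition. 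More uniformly, one can appeal to a $\{3, 5\}$-group-divisible design of type $2^n$, which falls within the scope of standard PBD-closure theorems for the set $\{3, 5\}$.

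The main obstacle is the $v \equiv 0 \pmod 4$ case, where the naive inflation of $K_n$ for even $n$ does not directly yield $K_v - I$ and must be patched either through iterated induction or by invoking a suitable GDD. A secondary difficulty is verifying the $K_{5 \times 2}$ base case concretely, since it is the only small ingredient that forces a $K_5$ block: one must exhibit a transversal $K_5$ whose removal leaves a $2$-regular remainder (on five vertices) plus an $8$-regular remainder (on the other five) that jointly admit a triangle decomposition. Both steps are finite case-checking in nature and rely on standard block-design techniques found throughout the \emph{Handbook of Combinatorial Designs}.
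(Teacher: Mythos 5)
First, note that the paper offers no proof of this statement at all: it is quoted as a known result and attributed to the \emph{Handbook of Combinatorial Designs} \cite{CoDi}, so there is no in-paper argument to compare yours against. Judged on its own terms, your plan is sound for exactly half of the cases. The inflation step is correct when $v=2n$ with $n$ odd: Theorem~\ref{K3K5-odd} gives a $\{K_3,K_5\}$-decomposition of $K_n$, blowing each point up into a pair turns it into a $\{K_{3\times 2},K_{5\times 2}\}$-decomposition of $K_{n\times 2}=K_v-I$, and your two base cases do check out ($K_{3\times 2}$ is the octahedron, which is four edge-disjoint triangles, and $K_{5\times 2}$ is one transversal $K_5$ plus ten triangles, obtainable from a near-one-factorization of $K_5$ on the part indices).

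The genuine gap is the case $v\equiv 0\pmod 4$, which you flag but do not close. Your first suggestion --- inflate a decomposition of $K_{v/2}-I_{v/2}$ obtained inductively and ``patch the inflated $1$-factor'' --- fails at the patching step: the inflation of a $1$-factor is a vertex-disjoint union of copies of $K_{2\times 2}=C_4$, a bipartite, triangle-free graph admitting no $\{K_3,K_5\}$-decomposition, and you give no mechanism for merging those leftover edges with the inflated blocks. Your second suggestion, that a $\{3,5\}$-GDD of type $2^{v/2}$ ``falls within the scope of standard PBD-closure theorems,'' is not accurate: PBD-closure of $\{3,5\}$ yields $(u,\{3,5\})$-PBDs for odd $u$ (which is essentially Theorem~\ref{K3K5-odd} itself), whereas a $\{3,5\}$-GDD of type $2^{v/2}$ is precisely a restatement of the theorem you are trying to prove, so invoking its existence is circular. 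To actually close this case you would need either the classical fact that $K_v-I$ has a triangle decomposition whenever $v\equiv 0$ or $2\pmod 6$ (which disposes of $v\equiv 0, 8\pmod{12}$) together with an explicit construction for $v\equiv 4\pmod{12}$, or a genuine GDD existence argument --- which is presumably why the paper simply cites the Handbook rather than proving the statement.
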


\section{Examples of Small Designs}

We first present several $D_i$-decompositions of various graphs for $i \in [1,8]$.
Beyond establishing existence of necessary base cases, 
these decompositions are used extensively in the general constructions seen in Section~\ref{Constructions}.

If $i,v_0,v_1,\ldots,v_6$ are integers and $D\in\{D_1, D_2,\dots, D_8\}$,
we define $D[v_0, v_1,\allowbreak \dots, v_6]+i$ to indicate $D[v_0+i, v_1+i,\dots, v_6+i]$.

\begin{example} \label{exK7}
  Let $V\bigl( \K_7 \bigr) =\Z_{6}\cup \{\infty\}$ and let
\begin{align*}
   \Delta_8 = \{& D_8 [0,1,3,4,2,5,\infty] +i : i \in \Z_{6} \}. 
\end{align*}   
Then $\Delta_8$ is a $D_8$-decomposition of $\K_{7}$.
\end{example}

\begin{example} \label{exK8}
  Let $V\bigl( \K_8 \bigr) =\Z_{8}$ and let
\begin{align*}
	\Delta_1 = \{& D_1[0,2,3,7,4,6,5]+i: i \in \Z_{8}\}, \\
	\Delta_2 = \{& D_2[0,5,6,3,1,2,4]+i: i \in \Z_{8}\}, \\
	\Delta_3 = \{& D_3[0,2,3,7,4,6,1]+i: i \in \Z_{8}\}, \\
	\Delta_4 = \{& D_4[0,1,3,7,4,5,2]+i: i \in \Z_{8}\}, \\
	\Delta_5 = \{& D_5[0,1,3,2,5,7,4]+i: i \in \Z_{8}\}, \\
	\Delta_6 = \{& D_6[0,1,3,4,7,2,6]+i: i \in \Z_{8}\}, \\
	\Delta_7 = \{& D_7[0,1,2,4,7,3,6]+i: i \in \Z_{8}\}, \\
	\Delta_8 = \{& D_8[0,1,3,7,2,4,5]+i: i \in \Z_{8}\}. 
\end{align*}   
Then $\Delta_i$ is a $D_i$-decomposition of $\K_{8}$ for $i\in [1,8]$.
\end{example}

\begin{example} \label{exK14}
  Let $V\bigl( \K_{14} \bigr) =\Z_{13}\cup \{\infty\}$ and let
\begin{align*}
    \Delta_1 = \{& \{D_1 [0,1,12,2,11,3,9]\cup D_1 [0,3,4,12,1,8,\infty]\} +i : i \in \Z_{13} \},\\
    \Delta_2 = \{& \{D_2 [0,1,12,2,11,3,9]\cup D_2 [0,2,5,6,1,8,\infty]\} +i : i \in \Z_{13} \},\\  
    \Delta_3 = \{& \{D_3 [0,1,12,2,11,3,9]\cup D_3 [0,5,6,4,7,1,\infty]\} +i : i \in \Z_{13} \},\\  
    \Delta_4 = \{& \{D_4 [0,1,12,2,11,3,5]\cup D_4 [0,3,4,10,1,8,\infty]\} +i : i \in \Z_{13} \},\\  
    \Delta_5 = \{& \{D_5 [0,1,12,2,11,3,9]\cup D_5 [0,2,1,6,9,3,\infty]\} +i : i \in \Z_{13} \},\\  
    \Delta_6 = \{& \{D_6 [0,1,12,2,11,3,9]\cup D_6 [0,2,5,6,11,4,\infty]\} +i : i \in \Z_{13} \},\\
    \Delta_7 = \{& \{D_7 [0,1,12,2,11,3,9]\cup D_7 [0,5,6,\infty,7,1,11]\} +i : i \in \Z_{13} \},\\  
    \Delta_8 = \{& \{D_8 [0,1,12,2,11,3,4]\cup D_8 [0,2,5,11,3,10,\infty]\} +i : i \in \Z_{13} \}. 
\end{align*}   
Then $\Delta_i$ is a $D_i$-decomposition of $\K_{14}$ for $i\in [1,8]$.
\end{example}

\begin{example} \label{exK15}
  Let $V\bigl( \K_{15} \bigr) =\Z_{15}$ and let
\begin{align*}
   \Delta_8 = \{& D_8 [0,1,3,5,2,14,4]\cup D_8 [0,14,10,1,6,13,7] \} +i : i \in \Z_{15} \}. 
\end{align*}   
Then $\Delta_8$ is a $D_8$-decomposition of $\K_{15}$.
\end{example}

\begin{example} \label{exK28}
  Let $V\bigl( \K_{28} \bigr) =\Z_{27}\cup \{\infty\}$ and let
\begin{align*}
   \Delta_1 =  \{& \{ D_1[0,1,26,2,25,3,23]\cup D_1[0,3,25,4,23,5,26]\cup \\
	& D_1[0,9,16,6,14,3,13]\cup D_1[0,12,14,26,10,23,\infty]\} +i : i \in \Z_{27}\},\\
	\Delta_2 = \{& \{ D_2[0,1,26,2,25,3,23]\cup D_2[0,3,19,4,23,5,26]\cup\\
	& D_2[0,5,16,6,14,3,13]\cup D_2[0,2,14,20,6,13,\infty]\} +i : i \in \Z_{27}\},\\ 
	\Delta_3 = \{& \{ D_3[0,1,26,2,25,3,23]\cup D_3[0,16,19,4,23,5,26]\cup \\
	& D_3[0,5,12,6,14,3,13]\cup D_3[0,9,11,21,6,19,\infty]\} +i : i \in \Z_{27}\},\\
	\Delta_4 = \{& \{ D_4[0,1,26,2,24,3,25]\cup D_4[0,3,23,4,22,5,20]\cup \\
	& D_4[0,9,5,16,4,17,10]\cup D_4[0,8,9,13,26,15,\infty]\} +i : i \in \Z_{27}\},\\ 
	\Delta_5 = \{& \{ D_5[0,1,26,2,25,3,23]\cup D_5[0,2,14,4,23,5,26]\cup \\
	& D_5[0,5,16,26,7,10,21]\cup D_5[0,11,4,17,3,15,\infty]\} +i : i \in \Z_{27}\},\\
	\Delta_6 = \{& \{ D_6[0,1,26,2,25,3,23]\cup D_6[0,2,5,26,18,1,11]\cup \\
	& D_6[0,9,21,26,5,18,19]\cup D_6[0,14,2,13,22,15,\infty]\} +i : i \in \Z_{27}\},\\
	\Delta_7 = \{& \{ D_7[0,1,26,2,25,3,24]\cup D_7[0,5,26,6,25,16,15],\\
	& D_7[0,10,26,11,18,1,14]\cup D_7[0,16,18,9,\infty,23,19]\} +i : i \in \Z_{27}\},\\
	\Delta_8 = \{& \{ D_8[0,1,26,2,25,3,21]\cup D_8[0,2,5,26,3,25,18],\\
	& D_8[0,8,18,7,19,2,16]\cup D_8[0,15,2,3,23,4,\infty]\} +i : i \in \Z_{27}\}.
\end{align*}   
Then $\Delta_i$ is a $D_i$-decomposition of $\K_{28}$ for $i\in [1,8]$.
\end{example}

\begin{example} \label{exK29}
  Let $V\bigl( \K_{29} \bigr) =\Z_{29}$ and let
\begin{align*}
	\Delta_8 = \{& \{ D_8[0,3,21,4,20,5,28]\cup D_8[0,27,5,26,16,25,21],\\
	& D_8[0,26,9,27,8,28,23]\cup D_8[0,22,23,21,5,20,24]\} +i : i \in \Z_{29}\}.
\end{align*}   
Then $\Delta_8$ is a $D_8$-decomposition of $\K_{29}$.
\end{example}

\begin{example} \label{exK3by7}
 Let $V\bigl(\K_{3\times 7}\bigr) = \Z_{21}$ with vertex partition $\{V_i: i\in \Z_3\}$, where $V_i=\{j\in \Z_{21}:j\equiv i\pmod{3}\}$. Let
\begin{align*}
	\Delta_8 = \{& \{ D_8[0,1,5,12,4,17,10]\cup D_8[0,2,7,12,8,18,20]\} +i : i \in \Z_{21}\}.
\end{align*}   
Then $\Delta_8$ is a $D_8$-decomposition of $\K_{3\times 7}$.
\end{example}

\begin{example} \label{exK5by7}
 Let $V\bigl(\K_{5\times 7}\bigr) = \Z_{35}$ with vertex partition $\{V_i: i\in \Z_5\}$, where $V_i=\{j\in \Z_{35}:j\equiv i\pmod{5}\}$. Let
\begin{align*}
	\Delta_8 = \{& \{ D_8[0,1,3,19,2,16,17]\cup D_8[0,3,7,18,6,8,16]\cup \\
	& D_8[0,6,13,19,12,1,14]\cup D_8[0,8,17,20,16,4,13]\} +i : i \in \Z_{35}\}.
\end{align*}   
Then $\Delta_8$ is a $D_8$-decomposition of $\K_{5\times 7}$.
\end{example}
\section{General Constructions}\label{Constructions}

For two edge-disjoint graphs (or digraphs) $G$ and $H$, we use $G\cup H$ to denote the graph (or digraph) with vertex set $V(G)\cup V(H)$ and edge (or arc) set $E(G)\cup E(H)$.
Furthermore, given a positive integer $x$, we use $xG$ to denote the vertex-disjoint union of $x$~copies of $G$. 

We now give our constructions for decompositions of $\K_v$ in Lemmas \ref{0mod14}, \ref{1,7(mod14)forRev}, and \ref{8mod14} which cover values of $v$ working modulo~$7$.
The main result is summarized in Theorem~\ref{thm:MainResult}.

\begin{lemma} \label{(2x)times7}
For every integer $x\geq 3$ and each $i \in [1,8]$, there exists a $D_i$-decomposition of $\K_{(2x) \times 7}-x\K_{7,7}$.
\end{lemma}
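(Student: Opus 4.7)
The plan is to inflate a $\{K_3, K_5\}$-decomposition of $K_{2x}$ minus a $1$-factor to obtain a $\{\K_{3 \times 7}, \K_{5 \times 7}\}$-decomposition of $\K_{(2x) \times 7} - x\K_{7,7}$, and then apply the $D_i$-decompositions of $\K_{3 \times 7}$ and $\K_{5 \times 7}$ already established in Section~2 to each piece. Since $2x \geq 6$ is even, Theorem~\ref{K3K5-even} produces a $\{K_3, K_5\}$-decomposition of $K_{2x} - I$ for some $1$-factor $I$. Any two $1$-factors of $K_{2x}$ are equivalent under relabeling, so we may identify the vertex set of $K_{2x}$ with the set of parts of $\K_{(2x) \times 7}$ in such a way that $I$ matches exactly the prescribed pairing of parts whose mutual $\K_{7,7}$'s are removed from $\K_{(2x) \times 7}$.

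Next I would expand each vertex of $K_{2x}$ into the corresponding part of $\K_{(2x) \times 7}$: each remaining edge $\{u,v\}$ of $K_{2x} - I$ blows up to the full symmetric bipartite digraph $\K_{7,7}$ between those two parts, since these are precisely the pairs of parts for which all arcs survive. Consequently every triangle of the $\{K_3, K_5\}$-decomposition inflates to a copy of $\K_{3 \times 7}$ and every $K_5$ inflates to a copy of $\K_{5 \times 7}$, and together these sub-digraphs partition the arc set of $\K_{(2x) \times 7} - x\K_{7,7}$.

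It then remains to decompose each of these $\K_{3 \times 7}$ and $\K_{5 \times 7}$ sub-digraphs into copies of $D_i$. For $i \in [1,7]$, this is immediate from Corollary~\ref{Knby7} applied with $n=3$ and $n=5$. The case $i = 8$ is the one that requires additional input, because $D_8$ is not self-reverse and is therefore not covered by the $C_7$-to-$D_i$ reduction underlying Corollary~\ref{Knby7}; this case is handled precisely by the ad hoc $D_8$-decompositions of $\K_{3 \times 7}$ and $\K_{5 \times 7}$ supplied in Examples~\ref{exK3by7} and~\ref{exK5by7}. The main conceptual work therefore lies in those small-design examples, which have already been verified; the present lemma itself is a routine inflation argument that glues them together, and no substantive obstacle is expected beyond confirming that the $1$-factor of Theorem~\ref{K3K5-even} can be taken to agree with the given pairing.
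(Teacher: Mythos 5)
Your proposal is correct and follows essentially the same route as the paper: apply Theorem~\ref{K3K5-even} to $K_{2x}-I$, inflate each vertex to a part of size $7$ so triangles and $K_5$'s become copies of $\K_{3\times 7}$ and $\K_{5\times 7}$, and then invoke Corollary~\ref{Knby7} for $i\in[1,7]$ and Examples~\ref{exK3by7} and~\ref{exK5by7} for $i=8$. Your extra remark that the $1$-factor can be relabeled to match the prescribed pairing of parts is a detail the paper leaves implicit, but it changes nothing of substance.
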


\begin{proof} 
By Theorem~\ref{K3K5-even}, there exists a $\{\K_3,\K_5\}$-decomposition of $\K_{2x}-I^*$.
Replacing each vertex of $\K_{2x}$ by a set of 7 vertices and each edge of $\K_{2x}-I^*$ by a copy of $\K_{7,7}$ gives a $\{\K_{3\times 7},\K_{5\times 7}\}$-decomposition of $\K_{(2x) \times 7}-x\K_{7,7}$.
By Corollary~\ref{C7Knby7}, there exists a $D_i$-decomposition of $\K_{3 \times 7}$ and $\K_{5 \times 7}$ for $i\in [1,7]$. 
Also by Examples~\ref{exK3by7} and \ref{exK5by7}, there exist $D_8$\nobreakdash-decompositions of $\K_{3\times 7}$ and $\K_{5\times 7}$, respectively.
Thus the result now follows.
\end{proof}

\begin{lemma} \label{0mod14}
Let $D \in \{D_1, D_2,\ldots, D_8\}$. There exists a $D$-de\-com\-po\-si\-tion of $\K_v$ for all $v\equiv 0  \pmod{14}$.
\end{lemma}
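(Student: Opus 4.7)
The plan is to handle the small cases $v=14$ and $v=28$ directly via Examples~\ref{exK14} and~\ref{exK28}, and then reduce every larger value $v=14x$ with $x\geq 3$ to these small cases combined with Lemma~\ref{(2x)times7}.

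For $v=14x$ with $x\geq 3$, I would partition $V(\K_v)$ into $2x$ groups $U_1,U_1',U_2,U_2',\ldots,U_x,U_x'$, each of size $7$, and observe the decomposition
\[
  \K_{14x} \;=\; \bigl(\K_{(2x)\times 7} - x\K_{7,7}\bigr) \;\cup\; \bigcup_{j=1}^{x}\Bigl(\K_{7,7}[U_j,U_j'] \cup \K_7[U_j] \cup \K_7[U_j']\Bigr),
\]
where the removed copies of $\K_{7,7}$ are precisely those joining each pair $U_j,U_j'$. For each $j$, the pair $\K_{7,7}[U_j,U_j']\cup\K_7[U_j]\cup\K_7[U_j']$ is exactly a copy of $\K_{14}$ on $U_j\cup U_j'$, and Example~\ref{exK14} supplies a $D_i$-decomposition of it for every $i\in[1,8]$. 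Meanwhile, Lemma~\ref{(2x)times7} (which requires $x\geq 3$, hence the separate treatment of $v=14,28$) provides a $D_i$-decomposition of the complement $\K_{(2x)\times 7}-x\K_{7,7}$. Concatenating these gives the required $D_i$-decomposition of $\K_{14x}$.

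Since $v\equiv 0\pmod{14}$ forces $v\in\{14,28\}\cup\{14x : x\geq 3\}$, these three cases exhaust the lemma. There is no real obstacle beyond setting up the partition carefully so that the removed $\K_{7,7}$'s line up with the chosen pairs of parts; the whole point is that Lemma~\ref{(2x)times7} was proved with exactly this auxiliary $x\K_{7,7}$ term so that it could be absorbed into $x$ vertex-disjoint copies of $\K_{14}$. The one sanity check I would perform is that $x\geq 3$ is truly the correct threshold (so that the $v=14$ and $v=28$ Examples are invoked for $x=1,2$ and not missed), which is consistent with the hypothesis of Lemma~\ref{(2x)times7}.
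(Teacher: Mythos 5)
Your proposal is correct and follows essentially the same route as the paper: handle $v=14$ and $v=28$ via Examples~\ref{exK14} and~\ref{exK28}, then for $v=14x$ with $x\geq 3$ write $\K_{14x}$ as $\K_{(2x)\times 7}-x\K_{7,7}$ (decomposed by Lemma~\ref{(2x)times7}) together with $x$ copies of $\K_{14}$ on the paired parts (each decomposed via Example~\ref{exK14}). The pairing of parts and the alignment of the removed $\K_{7,7}$'s is exactly the paper's construction.
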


\begin{proof}
Let $D\in\{D_1, D_2,\ldots, D_8\}$.
For $v=14$ and $v=28$, the results follow from Example~\ref{exK14} and~\ref{exK28}, respectively. 
For the remainder of the proof, we let $v=14x$ for some integer $x\geq3$.

Let $H$ be the complete symmetric directed graph with vertex set $H_1 \cup H_2 \cup \ldots \cup H_{2x}$ with $|H_j|=7$ for each $j\in [1,2x]$.
For each $i\in[1,8]$ and $j\in[1,x]$, let $B_{i,j}$ be a $D_i$-decomposition of $\K_{14}$ with vertex set $H_{2j-1}\cup H_{2j}$.
By Lemma~\ref{(2x)times7}, there is a $D_i$-decomposition $B'_i$ of  $\K_{(2x) \times 7}-x\K_{7,7}$ with vertex set $H_1 \cup H_2 \cup \ldots \cup H_{2x}$, edge set $E(H)-\bigcup_{1\leq j\leq x}\{E(H_{2j-1}\cup H_{2j})\}$, and the obvious vertex partition.
Then $(V,B_i)$ where $V=V(H)$ and $B_i=B'_i \cup B_{i,1}\cup B_{i,2}\cup \ldots \cup B_{i,x}$ is a $D_i$-decomposition of $\K_{14x}$.
\end{proof}

\begin{lemma} \label{1mod14}
Let $D \in \{D_1, D_2,\ldots, D_8\}$. There exists a $D$-de\-com\-po\-si\-tion of $\K_v$ for all $v\equiv 1 \pmod{14}$.
\end{lemma}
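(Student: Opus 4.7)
The plan is to split on the value of $i$.

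For $i \in [1,7]$, the statement is immediate from Corollary~\ref{1,7(mod14)forRev}, which already asserts a $D_i$-decomposition of $\K_v$ whenever $v \equiv 1$ or $7 \pmod{14}$.

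The substantive case is $D = D_8$, the one orientation in $\{D_1,\dots,D_8\}$ not isomorphic to its own reverse and therefore not covered by the $C_7$-to-$D_i$ reduction of Lemma~\ref{reduction}. The two smallest admissible orders, $v = 15$ and $v = 29$, are handled directly by Examples~\ref{exK15} and~\ref{exK29}. For the general case $v = 14x + 1$ with $x \geq 3$, my plan is to mimic the construction in Lemma~\ref{0mod14} but with an extra apex vertex adjoined. Partition $V(\K_v)$ as $\{\infty\} \cup H_1 \cup \dots \cup H_{2x}$ with $|H_j| = 7$, and pair the blocks as $\{H_{2j-1}, H_{2j}\}$ for $j \in [1,x]$. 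On each set $H_{2j-1} \cup H_{2j} \cup \{\infty\}$, apply Example~\ref{exK15} to produce a $D_8$-decomposition of the induced copy of $\K_{15}$. These $x$ copies of $\K_{15}$ share only $\infty$, and they jointly cover exactly the arcs lying inside a single block, between matched pairs of blocks, or incident with $\infty$. The remaining arcs form $\K_{(2x) \times 7} - x\K_{7,7}$, with the $x$ missing copies of $\K_{7,7}$ indexed by the matched pairs, and Lemma~\ref{(2x)times7} supplies a $D_8$-decomposition of this complementary digraph.

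The main technical point to verify is that the 1-factor implicit in Lemma~\ref{(2x)times7} can be taken to coincide with the chosen block pairing: Theorem~\ref{K3K5-even} produces a $\{K_3, K_5\}$-decomposition of $K_{2x} - I$ for some 1-factor $I$, and since every 1-factor of $K_{2x}$ is equivalent to any other under vertex relabeling, we are free to relabel the $2x$ parts so that $I = \{\{2j-1, 2j\}: j \in [1,x]\}$. With this alignment, the two pieces of the decomposition together cover every arc of $\K_v$ exactly once, completing the argument.
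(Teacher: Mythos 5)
Your proposal is correct and follows essentially the same route as the paper: the $i\in[1,7]$ cases via Corollary~\ref{1,7(mod14)forRev}, the small cases $v=15,29$ via Examples~\ref{exK15} and~\ref{exK29}, and for $v=14x+1$ with $x\geq 3$ the same decomposition into $x$ copies of $\K_{15}$ sharing the apex $\infty$ plus a copy of $\K_{(2x)\times 7}-x\K_{7,7}$ from Lemma~\ref{(2x)times7}. Your explicit remark about relabeling the parts so the removed $1$-factor matches the block pairing is a detail the paper leaves implicit, but it does not change the argument.
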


\begin{proof}
For $D \in \{D_1, D_2,\ldots, D_7\}$, the result follows from Corollary \ref{1,7(mod14)forRev}. Thus it remains to prove the lemma for $D=D_8$. 

Let  $D=D_8$. In the case $v=15$ and $v=29$, the results follow from Examples \ref{exK15} and \ref{exK29}, respectively. For the remainder of the proof, we let $v=14x+1$ for some integer $x\geq3$.

Let $H$ be the complete symmetric directed graph with vertex set $H_1 \cup H_2 \cup \ldots \cup H_{2x}\cup\{\infty\}$ with $|H_j|=7$ for each $j\in [1,2x]$.
For each $j\in[1,x]$, let $B_{j}$ be a $D_8$-decomposition of $\K_{15}$ with vertex set $H_{2j-1}\cup H_{2j}\cup\{\infty\}$.
By Lemma~\ref{(2x)times7}, there is a $D_8$-decomposition $B'$ of  $\K_{(2x) \times 7}-x\K_{7,7}$ with vertex set $H_1 \cup H_2 \cup \ldots \cup H_{2x}$, edge set $E(H)-\bigcup_{1\leq j\leq x}\{E(H_{2j-1}\cup H_{2j})\}$, and the obvious vertex partition.
Then $(V,B_i)$ where $V=V(H)$ and $B_i=B' \cup B_{1}\cup B_{2}\cup \ldots \cup B_{x}$ is a $D_8$-decomposition of $\K_{14x+1}$.
\end{proof}

\begin{lemma} \label{(2x+1)times7}
For every integer $x\geq 3$ and each $i \in [1,8]$, there exists a $D_i$-decomposition of $\K_{(2x+1) \times 7}$.
\end{lemma}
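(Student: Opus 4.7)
The plan is to mirror the strategy used in Lemma~\ref{(2x)times7}, but starting from the odd-order variant of the $\{K_3,K_5\}$ decomposition theorem, since now the number of parts $2x+1$ is odd and there is no need to delete a $1$-factor.

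First, since $2x+1\geq 7$ is odd, Theorem~\ref{K3K5-odd} provides a $\{K_3,K_5\}$-decomposition of $K_{2x+1}$. I would then perform the standard blow-up: replace each vertex of $K_{2x+1}$ by a set of $7$ vertices, and replace each edge by a copy of $\K_{7,7}$ on the corresponding two blocks of $7$. Because every edge of $K_{2x+1}$ lies in exactly one $K_3$ or $K_5$ block, the resulting $\K_{7,7}$'s partition the arc set of $\K_{(2x+1)\times 7}$, and within each block of the original design the induced arcs form a copy of $\K_{3\times 7}$ or $\K_{5\times 7}$. This yields a $\{\K_{3\times 7},\K_{5\times 7}\}$-decomposition of $\K_{(2x+1)\times 7}$.

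To finish, I would decompose each block of the resulting design into copies of $D_i$. For $i\in[1,7]$, Corollary~\ref{Knby7} (applied with $n=3$ and $n=5$) supplies the required $D_i$-decompositions of $\K_{3\times 7}$ and $\K_{5\times 7}$. For $i=8$, the explicit decompositions in Examples~\ref{exK3by7} and~\ref{exK5by7} supply $D_8$-decompositions of $\K_{3\times 7}$ and $\K_{5\times 7}$, respectively. Taking the union of all these block-level decompositions yields the desired $D_i$-decomposition of $\K_{(2x+1)\times 7}$.

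There is no real obstacle: the main work has already been done in Theorem~\ref{K3K5-odd}, Corollary~\ref{Knby7}, and the small $D_8$ base cases, and the argument is simply a check that every arc of $\K_{(2x+1)\times 7}$ is accounted for exactly once. The only place to be careful is ensuring $x\geq 3$ makes $2x+1\geq 7$ so that Theorem~\ref{K3K5-odd} applies and that both $\K_{3\times 7}$ and $\K_{5\times 7}$ base cases are genuinely available for every $i\in[1,8]$.
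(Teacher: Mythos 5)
Your proposal is correct and follows essentially the same route as the paper: apply Theorem~\ref{K3K5-odd} to $K_{2x+1}$, blow up each vertex to a set of $7$ vertices to obtain a $\{\K_{3\times 7},\K_{5\times 7}\}$-decomposition of $\K_{(2x+1)\times 7}$, and then fill in each block using Corollary~\ref{Knby7} for $i\in[1,7]$ and Examples~\ref{exK3by7} and~\ref{exK5by7} for $i=8$. No gaps.
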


\begin{proof} 
By Theorem~\ref{K3K5-odd}, there exists a $\{\K_3,\K_5\}$-decomposition of $\K_{2x+1}$.
Replacing each vertex of $\K_{2x+1}$ by a set of 7 vertices and each edge of $\K_{2x+1}$ by a copy of $\K_{7,7}$ gives a $\{\K_{3\times 7},\K_{5\times 7}\}$-decomposition of $\K_{(2x+1) \times 7}$.
By Corollary~\ref{C7Knby7}, there exists a $D_i$-decomposition of $\K_{3 \times 7}$ and $\K_{5 \times 7}$ for $i\in [1,7]$. 
Also by Examples~\ref{exK3by7} and \ref{exK5by7}, there exist $D_8$\nobreakdash-decompositions of $\K_{3\times 7}$ and $\K_{5\times 7}$, respectively.
Thus the result now follows.
\end{proof}

\begin{lemma} \label{7mod14}
Let $D \in \{D_1, D_2,\ldots, D_8\}$. There exists a $D$-de\-com\-po\-si\-tion of $\K_v$ for all $v\equiv 7 \pmod{14}$.
\end{lemma}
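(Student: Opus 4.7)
For $D \in \{D_1, D_2, \ldots, D_7\}$, the claim follows immediately from Corollary~\ref{1,7(mod14)forRev}, since $v \equiv 7 \pmod{14}$ is one of the residues covered there. Hence the only case requiring work is $D = D_8$.

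For $D_8$, the plan mirrors the strategies of Lemmas~\ref{0mod14} and~\ref{1mod14}. The base case $v = 7$ is handled by Example~\ref{exK7}. For $v \geq 21$ with $v \equiv 7 \pmod{14}$, I would write $v = 7(2x+1)$ for some integer $x \geq 1$ and partition $V(\K_v)$ into $2x+1$ parts $H_1, H_2, \ldots, H_{2x+1}$, each of size $7$. The inter-part arcs form a copy of $\K_{(2x+1) \times 7}$ and the intra-part arcs form $2x+1$ vertex-disjoint copies of $\K_7$, giving the decomposition
\[
  \K_v \;=\; \K_{(2x+1) \times 7} \;\cup\; \bigcup_{j=1}^{2x+1} \K_7.
\]
Each copy of $\K_7$ is decomposed into $D_8$'s using Example~\ref{exK7}.

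It remains to produce a $D_8$-decomposition of $\K_{(2x+1) \times 7}$. For $x \geq 3$ this is precisely Lemma~\ref{(2x+1)times7}. The only gap is the two small cases $x = 1$ and $x = 2$ (that is, $v = 21$ and $v = 35$), which lie outside the range of that lemma; but these correspond to $\K_{3 \times 7}$ and $\K_{5 \times 7}$, for which Examples~\ref{exK3by7} and~\ref{exK5by7} supply direct $D_8$-decompositions. Combining the pieces yields the desired $D_8$-decomposition of $\K_v$. The main subtlety is therefore simply not to overlook these two small values of $v$, since the uniform blow-up argument behind Lemma~\ref{(2x+1)times7} is only stated for $x \geq 3$.
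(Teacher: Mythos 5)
Your proof is correct and follows essentially the same approach as the paper: reduce $D_1,\ldots,D_7$ to Corollary~\ref{1,7(mod14)forRev}, settle $v=7$ by Example~\ref{exK7}, and for larger $v$ split $\K_{7(2x+1)}$ into $2x+1$ vertex-disjoint copies of $\K_7$ together with $\K_{(2x+1)\times 7}$. In fact your handling of the small cases is more careful than the paper's own write-up, which takes $x>1$ (thus never explicitly addressing $v=21$) and invokes Lemma~\ref{(2x+1)times7} at $x=2$ although that lemma is stated only for $x\ge 3$, whereas you correctly route $v=21$ and $v=35$ through the direct $D_8$-decompositions of $\K_{3\times 7}$ and $\K_{5\times 7}$ in Examples~\ref{exK3by7} and~\ref{exK5by7}.
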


\begin{proof}
For $D \in \{D_1, D_2,\ldots, D_7\}$, the result follows from Corollary \ref{1,7(mod14)forRev}. Thus it remains to prove the lemma for $D=D_8$. 

Let  $D=D_8$. In the case $v=7$, the result follows from Examples \ref{exK7}. For the remainder of the proof, we let $v=14x+7$ for some integer $x>1$.

Let $H$ be the complete symmetric directed graph with vertex set $H_1 \cup H_2 \cup \ldots \cup H_{2x+1}$ with $|H_j|=7$ for each $j\in [1,2x+1]$.
For each $j\in[1,2x+1]$, let $B_{j}$ be a $D_8$-decomposition of $\K_{7}$ with vertex set $H_{j}$.
By Lemma~\ref{(2x+1)times7}, there is a $D_8$-decomposition $B'$ of  $\K_{(2x+1) \times 7}$ with vertex set $H_1 \cup H_2 \cup \ldots \cup H_{2x+1}$ and the obvious vertex partition.
Then $(V,B_i)$ where $V=V(H)$ and $B_i=B' \cup B_{1}\cup B_{2}\cup \ldots \cup B_{2x+1}$ is a $D_8$-decomposition of $\K_{14x+7}$.
\end{proof}

\begin{lemma} \label{8mod14}
Let $D \in \{D_1, D_2,\ldots, D_8\}$. There exists a $D$-de\-com\-po\-si\-tion of $\K_v$ for all $v\equiv 8  \pmod{14}$.
\end{lemma}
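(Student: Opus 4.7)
The plan is to imitate Lemma~\ref{7mod14} with $\K_8$ playing the role that $\K_7$ played there. The base case $v = 8$ is supplied directly by Example~\ref{exK8}. For $v = 14x+8$ with $x \geq 1$, I would write $v = 7(2x+1) + 1$ and partition $V(\K_v)$ as $H_1 \cup H_2 \cup \cdots \cup H_{2x+1} \cup \{\infty\}$ with $|H_j| = 7$ for each $j$.

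The arcs of $\K_v$ then split naturally into two classes: the arcs induced on each $H_j \cup \{\infty\}$ (a copy of $\K_8$), and the remaining cross-group arcs, which form a copy of $\K_{(2x+1)\times 7}$. Taken together, the $\K_8$'s cover every arc lying inside some $H_j$ as well as every arc incident to $\infty$, so this split is a genuine edge partition. Applying the $D_i$-decomposition of $\K_8$ from Example~\ref{exK8} to each $H_j \cup \{\infty\}$ handles the first class for every $i \in [1,8]$, and it only remains to provide a $D_i$-decomposition of $\K_{(2x+1) \times 7}$ respecting this partition.

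For $x \geq 3$, such a decomposition is exactly Lemma~\ref{(2x+1)times7}, and assembling the pieces finishes the argument. The only substantive point requiring care is that the two values $x = 1$ and $x = 2$ (i.e.\ $v = 22$ and $v = 36$) lie outside the range of Lemma~\ref{(2x+1)times7}; however, they do not call for a new base case, since $\K_{3 \times 7}$ and $\K_{5 \times 7}$ are already decomposable: Corollary~\ref{Knby7} covers $i \in [1,7]$, while Examples~\ref{exK3by7} and~\ref{exK5by7} cover $i = 8$. I do not foresee any real obstacle in this proof; the only pitfall is remembering to treat $v = 22$ and $v = 36$ explicitly instead of blindly invoking Lemma~\ref{(2x+1)times7}.
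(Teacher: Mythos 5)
Your proof is correct and takes essentially the same route as the paper: the paper also handles $v=8$ via Example~\ref{exK8} and, for $v=14x+8$, pastes $D_i$-decompositions of $\K_8$ on each $H_j\cup\{\infty\}$ together with a $D_i$-decomposition of $\K_{(2x+1)\times 7}$ from Lemma~\ref{(2x+1)times7}. Your explicit treatment of $v=22$ and $v=36$ via Corollary~\ref{Knby7} and Examples~\ref{exK3by7} and~\ref{exK5by7} is in fact slightly more careful than the paper, whose stated ranges ($x>1$ in the proof, while Lemma~\ref{(2x+1)times7} is stated only for $x\ge 3$) leave those small cases implicit.
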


\begin{proof}
Let $D\in\{D_1, D_2,\ldots, D_8\}$.
For $v=8$, the result follows from Example~\ref{exK8}.
For the remainder of the proof, we let $v=14x+8$ for some integer $x>1$.

Let $H$ be the complete symmetric directed graph with vertex set $H_1 \cup H_2 \cup \ldots \cup H_{2x+1}\cup \{\infty\}$ with $|H_j|=7$ for each $j\in [1,2x+1]$.
For each $i\in[1,8]$ and $j\in[1,2x+1]$, let $B_{i,j}$ be a $D_i$-decomposition of $\K_{8}$ with vertex set $H_{j}\cup \{\infty\}$.
By Lemma~\ref{(2x+1)times7}, there is a $D_i$-decomposition $B'_i$ of  $\K_{(2x+1) \times 7}$ with vertex set $H_1 \cup H_2 \cup \ldots \cup H_{2x+1}$ and the obvious vertex partition.
Then $(V,B_i)$ where $V=V(H)$ and $B_i=B'_i \cup B_{i,1}\cup B_{i,2}\cup \ldots \cup B_{i,2x+1}$ is a $D_i$-decomposition of $\K_{14x+8}$.
\end{proof}

Since $D_9$ is the reverse orientation of $D_8$,  existence of a $(\K_v,D_9)$-design is equivalent to existence of a $(\K_v,D_8)$-design. Thus, combining the previous results from Lemmas~\ref{0mod14},~\ref{1mod14},~\ref{7mod14}, and~\ref{8mod14}, we obtain the following necessary and sufficient conditions for the existence of a decomposition of $\K_v$ into the orientated heptagons.

\begin{theorem} \label{thm:MainResult}
 Let $D$ be an orientated heptagon. There exists a $D$-de\-com\-po\-si\-tion of $\K_v$ if and only if $v \equiv 0$ or $1 \pmod{7}$.
\end{theorem}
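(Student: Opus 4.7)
The plan is to derive the theorem as a direct assembly of the results already in place, breaking on the particular oriented heptagon $D$ and, within that, on the residue of $v$ modulo~$14$.

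\textbf{Necessity.} For $D\in\{D_1,\dots,D_9\}$, the congruence $v(v-1)\equiv 0\pmod 7$, equivalently $v\equiv 0$ or $1\pmod 7$, is exactly Lemma~\ref{lem:MainNecessaryConditions}. For $D=D_{10}$ the same congruence is part of Theorem~\ref{thm:Directed6-cycle}. So in every case the stated condition is necessary.

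\textbf{Sufficiency.} Suppose $v\geq 7$ and $v\equiv 0$ or $1\pmod 7$; then $v\bmod 14\in\{0,1,7,8\}$. For each $D\in\{D_1,\dots,D_8\}$, the four lemmas~\ref{0mod14},~\ref{1mod14},~\ref{7mod14}, and~\ref{8mod14} handle precisely these four residue classes and together give a $D$\nobreakdash-decomposition of $\K_v$. For $D=D_9$, observe that $D_9=\Rev(D_8)$ and that $\K_v$ is self-reverse: reversing every arc in every block of a $D_8$-decomposition of $\K_v$ produces a $D_9$-decomposition of $\K_v$, so the case $D=D_9$ reduces to the case $D=D_8$ already settled. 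For $D=D_{10}$ the sufficiency is literally Theorem~\ref{thm:Directed6-cycle}.

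\textbf{Main obstacle.} There is essentially no remaining obstacle. All the substantive work has been carried out earlier: the small base cases in Examples~\ref{exK7}--\ref{exK5by7}, the reduction via the $\{K_3,K_5\}$-decomposition of $K_{2x+1}$ or $K_{2x}-I$ combined with Liu's $C_7$-decompositions of $K_{n\times 7}$ (Proposition~\ref{C7Knby7}), and the assembly in Lemmas~\ref{(2x)times7} and~\ref{(2x+1)times7} followed by the modular lemmas. The only thing that needs to be verified at this stage is that the four residue classes $0,1,7,8\pmod{14}$ truly exhaust $\{v\geq 7:v\equiv 0\text{ or }1\pmod 7\}$, and that the base values $v=7,8,14,15,28,29$ used to seed those inductive constructions are all covered by Examples~\ref{exK7}--\ref{exK29}; both checks are immediate.
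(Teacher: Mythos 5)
Your proposal is correct and matches the paper's own argument essentially verbatim: necessity from Lemma~\ref{lem:MainNecessaryConditions} (and Theorem~\ref{thm:Directed6-cycle} for $D_{10}$), sufficiency by splitting $v$ into the residue classes $0,1,7,8 \pmod{14}$ handled by Lemmas~\ref{0mod14}, \ref{1mod14}, \ref{7mod14}, and~\ref{8mod14}, with $D_9$ obtained by reversing a $D_8$-decomposition and $D_{10}$ covered by Theorem~\ref{thm:Directed6-cycle}. No further comment is needed.
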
 

\section{Acknowledgements}
The author wish to thank Saad El-Zanati for helpful suggestions during the course of this work.

\end{document}